\theoremstyle{plain}
\newtheorem{thm}{Theorem}[section]
\newtheorem{lem}[thm]{Lemma}
\newtheorem{cor}[thm]{Corollary}
\newtheorem{rem}[thm]{Remark}
\newtheorem{defn}[thm]{Definition}
	\title{All minimal $[9,4]_{2}$-codes are hyperbolic quadrics}
\author{Valentino Smaldore \footnote{Valentino Smaldore:
valentino.smaldore@unibas.it 
Dipartimento di Matematica, Informatica ed Economia,
Universit\`{a} degli Studi della Basilicata, Viale dell'Ateneo Lucano 10, 85100 Potenza, Italy.}}
\date{}
\begin{document}
\maketitle
\begin{abstract}
Minimal codes are being intensively studied in last years. $[n,k]_{q}$-minimal linear codes are in bijection with strong blocking sets of size $n$ in $PG(k-1,q)$ and a lower bound for the size of strong blocking sets is given by $(k-1)(q+1)\leq n$. In this note we show that all strong blocking sets of length 9 in $PG(3,2)$ are the hyperbolic quadrics $Q^{+}(3,2)$.

\end{abstract}

\section{Introduction}
\subsection{First definitions}
In this section we fix the notation that we will use in the paper and some preliminary results on minimal codes.
\begin{defn}
  \begin{itemize}
   \item An $[n,k]_{q}$-\textit{linear code} $\mathcal{C}$ is a subspace of $\mathbb{F}_{q}^{n}$ of dimension $k$.
   \item All the elements of $\mathcal{C}$ are said \textit{codewords}.
   \item The \textit{(Hamming) support} of a vector $(v_{1},v_{2},\ldots,v_{n})=v\in \mathbb{F}_{q}^{n}$ is\\ $\sigma(v)=\{i|v_{i}\neq0\}\subseteq\{1,2,\ldots,n\}$.
   \item The \textit{(Hamming) weight} of $v$ is $w(v)=|\sigma(v)|$.
   \item A \textit{Generator matrix} $G$ for $\mathcal{C}$ is an $k\times n$ matrix over $\mathbb{F}_{q}$ such that\\ $\mathcal{C}=rowspace(G)$ .
  \end{itemize}
\end{defn}
In \cite{Massey}, J. L. Massey used \textit{minimal codewords} to determine the access structure in its codebased secrete sharing scheme.
\begin{defn}
 Let $\mathcal{C}$ be an $[n,k]_{q}$ code. A nonzero codeword $c\in\mathcal{C}$ is \textit{minimal} if for every codeword $c'\in\mathcal{C}$ such that $\sigma(c')\subseteq\sigma(c)$, $c'=\lambda c$, for some $\lambda$ in $\mathbb{F}^*_{q}$. We say that $\mathcal{C}$ is \textit{minimal} if all its codewords are minimal.
\end{defn}

As it was noticed in \cite{A2}, minimal codes are in bijection with strong blocking sets in projective spaces. A strong blocking set in $PG(k-1,q)$ is a set of points $\mathcal{M}$ such that, for each hyperplane $\sigma$, $\langle\sigma\cap\mathcal{M}\rangle=\sigma$. Strong blocking sets were introduced by A. A. Davydov, M. Giulietti, S. Marcugini and F. Pambianco in \cite{DGMP}. Later, the same concept was also studied by Sz. Fancsali and P. Sziklai in \cite{HP}, under the name of \textit{generator sets}, by M. Bonini and M. Borello in \cite{BB}, under the name of \textit{cutting  blocking sets}, and by T. H\'{e}ger, B. Patk\'{o}s and M. Tak\'{a}ts in \cite{Heger}, under the name of \textit{hyperplane generating set}.

\begin{thm}\cite[Theorem 3.4]{A2}
 Let $\mathcal{C}$ be a non-degenerate $[n,k]_{q}$-code with generator matrix $G=(G_{1},\ldots ,G_{n})$. Let $S=\{G_{1},\ldots ,G_{n}\}$ be the corresponding point set of $PG(k-1, q)$. Then, $\mathcal{C}$ is a minimal code if and only if $S$ is a strong blocking set.
\end{thm}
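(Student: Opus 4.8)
The plan is to turn the statement into the standard dictionary between a linear code and the projective system formed by the columns of one of its generator matrices, and then read off both implications.

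\emph{Setup.} Regard codewords as $x\mapsto xG$ with $x\in\mathbb{F}_q^k$ a row vector; this is a linear bijection $\mathbb{F}_q^k\to\mathcal{C}$ (surjective because $\mathcal{C}=\mathrm{rowspace}(G)$, injective because $\mathrm{rank}\,G=k$), and the $i$-th entry of $xG$ is the scalar product $x\cdot G_i$. Hence $i\notin\sigma(xG)$ exactly when the point $G_i$ lies on the hyperplane $H_x:=\{v\in PG(k-1,q): x\cdot v=0\}$, so the complement of the support of $xG$ is (the index set of) $S\cap H_x$. Non-degeneracy guarantees every $G_i\neq 0$, so $S$ is genuinely a set of $n$ points of $PG(k-1,q)$, and $x\mapsto H_x$ is the usual duality between nonzero $x$ modulo scalars and hyperplanes; since $H_{\lambda x}=H_x$, this is compatible with $\sigma(\lambda xG)=\sigma(xG)$. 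The one fact to record is that, for nonzero $x,y$,
\[
\sigma(yG)\subseteq\sigma(xG)\iff S\cap H_x\subseteq S\cap H_y\iff S\cap H_x\subseteq H_y .
\]

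\emph{From strong blocking to minimality.} Let $c=xG\neq 0$, so $x\neq 0$, and let $c'=yG$ be any nonzero codeword with $\sigma(c')\subseteq\sigma(c)$. By the displayed equivalence $S\cap H_x\subseteq H_y$, whence $H_x=\langle S\cap H_x\rangle\subseteq H_y$ because $S$ is a strong blocking set; two hyperplanes, one contained in the other, coincide, so $H_x=H_y$, $y$ is a scalar multiple of $x$, and $c'=\lambda c$. Thus every nonzero codeword is minimal, i.e.\ $\mathcal{C}$ is minimal.

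\emph{From minimality to strong blocking (contrapositive).} Suppose $S$ is not a strong blocking set: some hyperplane $H$ has $U:=\langle S\cap H\rangle\subsetneq H$. Then $U$ is a projective subspace of dimension at most $k-3$, hence is contained in some hyperplane $H'\neq H$, and $S\cap H\subseteq U\subseteq H'$. Choosing nonzero $x,y$ with $H_x=H$ and $H_y=H'$ and setting $c=xG$, $c'=yG$ (both nonzero), the displayed equivalence gives $\sigma(c')\subseteq\sigma(c)$, yet $c'$ is not a scalar multiple of $c$ because $H'\neq H$; so $c$ is not minimal and $\mathcal{C}$ is not minimal.

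I do not expect a genuine obstacle: the whole content is the duality translation above, and the only points needing a word are the boundary cases — non-degeneracy is what makes $|S|=n$ inside $PG(k-1,q)$; one should assume $k\geq 2$ so that a proper subspace of a hyperplane does lie in a second hyperplane; and the subcase $S\cap H=\varnothing$ (so $U=\varnothing$, contained in every hyperplane) is covered by exactly the same reasoning.
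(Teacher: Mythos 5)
The paper does not prove this statement at all --- it is quoted verbatim from \cite[Theorem 3.4]{A2} and used as a black box --- so there is nothing internal to compare against. Your argument is correct and is in fact the standard proof of this correspondence: identify the complement of the support of $xG$ with the hyperplane section $S\cap H_x$ via $i\notin\sigma(xG)\iff x\cdot G_i=0$, and then both implications fall out of the displayed equivalence, exactly as in \cite{A2}. Two small points of hygiene, neither of which is a gap: (i) with the paper's literal definition of a minimal codeword one must exclude $c'=0$ (since $\sigma(0)=\varnothing\subseteq\sigma(c)$ but $0\neq\lambda c$ for $\lambda\in\mathbb{F}_q^*$), which you implicitly do by taking $c'$ nonzero --- this is the usual convention; (ii) non-degeneracy gives $G_i\neq 0$ for all $i$, but by itself does not force the $n$ columns to be pairwise non-proportional, so $|S|=n$ is not automatic --- fortunately your key equivalence $\{i:G_i\in H_x\}\subseteq\{i:G_i\in H_y\}\iff S\cap H_x\subseteq H_y$ is insensitive to repeated points, so the proof goes through unchanged. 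Your caveat about $k\geq 2$ (needed so that a proper subspace of a hyperplane lies in a second hyperplane) is the right one to record.
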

From the previous theorem it follows the correspondence between \\$[n,k]_{q}$-minimal code and strong blocking sets of $PG(k-1,q)$ of size $n$. We refer the reader to \cite{A2} for more details on minimal codes, while we refer to \cite{Heger} for a more geometrical approach.

\subsection{Minimal length of a minimal code}
A general problem is to determine the minimal length of a $[n,k]_{q}$-minimal code can have, fixing $k$ and $q$. Equivalently, the problem is to determine the minimum size $n$ of a strong blocking set in $PG(k-1,q)$. In $\cite{Beut}$, A. Beutelspacher found a lower bound for $n$, under the hypothesis $k-1\leq q$. Later, G. N. Alfarano, M. Borello, A. Neri and A. Ravagnani extended the result for all values of $k$ and $q$.
\begin{thm}\cite[Theorem 2.14]{A1}
\label{main}
 Let $\mathcal{C}$ be a $[n,k]_{q}$-minimal code. Then $n\geq(k-1)(q+1)$.
\end{thm}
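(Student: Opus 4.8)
By the correspondence between minimal codes and strong blocking sets recalled above, the statement is equivalent to the assertion that every strong blocking set $\mathcal M$ of $PG(k-1,q)$ has $|\mathcal M|\ge (k-1)(q+1)$; this is what I would prove, by induction on $k$. The cases $k=1,2$ are immediate: $PG(0,q)$ imposes no condition, and in $PG(1,q)$ the hyperplanes are the points, so the only strong blocking set is the whole line, of size $q+1=(2-1)(q+1)$.

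Let $k\ge 3$ and let $\mathcal M\subseteq PG(k-1,q)$ be a strong blocking set. Two structural facts would drive the induction. First, a counting lemma read off from the definition: if $\pi$ is a subspace of codimension $2$, then the $q+1$ hyperplanes through $\pi$ partition $PG(k-1,q)\setminus\pi$ into $q+1$ parts, and $\mathcal M$ must meet every part, for otherwise some hyperplane $\sigma\supseteq\pi$ would satisfy $\mathcal M\cap\sigma\subseteq\pi$, contradicting $\langle\mathcal M\cap\sigma\rangle=\sigma$; hence $|\mathcal M\setminus\pi|\ge q+1$ for every codimension-$2$ subspace $\pi$. Second, projections of strong blocking sets are again strong blocking sets: for a point $P$, the image $\mathcal M'$ of $\mathcal M\setminus\{P\}$ under the projection $\rho_P\colon PG(k-1,q)\setminus\{P\}\to PG(k-2,q)$ is a strong blocking set of $PG(k-2,q)$, since every hyperplane of the quotient is $\rho_P(\sigma)$ for a hyperplane $\sigma\ni P$ and, $\rho_P$ being induced by a linear quotient map, the image of the spanning set $\mathcal M\cap\sigma$ spans $\rho_P(\sigma)$.

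The plan is then to run the induction through the projection. Take $P\in\mathcal M$; the inductive hypothesis gives $|\mathcal M'|\ge (k-2)(q+1)$, while counting $\mathcal M\setminus\{P\}$ along the lines through $P$ gives $|\mathcal M| = 1 + |\mathcal M'| + \sum_{\bar Q\in\mathcal M'}(|\mathcal M\cap\ell_{\bar Q}|-2)$, where $\ell_{\bar Q}$ is the line through $P$ whose points other than $P$ project to $\bar Q$. Since each summand is $\ge 0$, it suffices to choose $P$ so that the fibre excess $\sum_{\bar Q\in\mathcal M'}(|\mathcal M\cap\ell_{\bar Q}|-2)$ is at least $q$; then $|\mathcal M|\ge 1 + (k-2)(q+1) + q = (k-1)(q+1)$. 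Locating such a centre $P$ is where the first structural fact (applied to $\mathcal M$ and to suitable sections of it) is used: one looks for a $P$ lying on a line meeting $\mathcal M$ in many points, exploiting that the lines through $P$ inside a hyperplane missing a codimension-$2$ subspace are strongly constrained. One must also settle separately the base case $PG(2,q)$ of any auxiliary induction, where the relevant inequalities can be checked directly.

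The main obstacle is precisely this bookkeeping, namely guaranteeing $q+1$ genuinely new points of $\mathcal M$ at each step. The tempting shortcut of restricting $\mathcal M$ to a hyperplane $H$ and invoking the induction on $\mathcal M\cap H$ fails, because a hyperplane section of a strong blocking set need not be a strong blocking set of $H$; it need not even meet every codimension-$2$ subspace contained in $H$ (already the plane sections of $Q^{+}(3,2)$ are not strong blocking sets of those planes). So the argument genuinely rests on the analysis of the fibres of a well-chosen projection, equivalently on extracting $q$ units of collinearity through a well-chosen point, and making this analysis airtight is the delicate step.
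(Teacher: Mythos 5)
This theorem is quoted in the paper from \cite[Theorem 2.14]{A1} and no proof of it is given there, so there is no internal argument to compare yours against; I can only assess your sketch on its own terms and against the published proofs. Your two structural lemmas are correct and correctly justified: the observation that $|\mathcal M\setminus\pi|\ge q+1$ for every codimension-$2$ subspace $\pi$ (the $q+1$ hyperplanes through $\pi$ each contain a point of $\mathcal M$ off $\pi$, and these points are pairwise distinct) is precisely the engine of the arguments in \cite{A1} and \cite{Heger}, and your claim that the projection of a strong blocking set from a point is again a strong blocking set of the quotient is true for the reason you give. Your counting identity $|\mathcal M| = 1 + |\mathcal M'| + \sum_{\bar Q}(|\mathcal M\cap\ell_{\bar Q}|-2)$ is also correct, and your warning that hyperplane sections of strong blocking sets need not be strong blocking sets is an important and accurate observation.

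The gap is the step you yourself flag: you never establish that some centre $P\in\mathcal M$ has fibre excess at least $q$, and without it the induction only yields $|\mathcal M|\ge 1+(k-2)(q+1)$, which is short of the claim by exactly $q$. This missing step is not a routine bookkeeping matter; it is essentially equivalent to the theorem itself. Indeed, the required inequality $\sum_{\bar Q}(|\mathcal M\cap\ell_{\bar Q}|-2)\ge q$ says that at least $q$ surplus collinearities of $\mathcal M$ concentrate on the pencil of lines through a single point, and nothing in your two lemmas rules out, a priori, a strong blocking set that determines $|\mathcal M|-1$ distinct directions at every one of its points (a cap-like configuration), for which every fibre excess would be $0$. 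Already in the base case $PG(2,q)$ the excess at a point $P\in\mathcal M$ equals $|\mathcal M|-q-2$, so asking for excess $\ge q$ there is literally the statement $|\mathcal M|\ge 2(q+1)$ being proved; you correctly note the plane must be handled directly (projecting from a point \emph{outside} $\mathcal M$ works there), but for $k\ge 4$ the analogous circularity is not resolved. In the extremal example $Q^+(3,2)$ every point has excess exactly $q=2$, so there is no slack: the choice of $P$ must be justified, not merely hoped for. As written, the proposal is a plausible plan with its decisive step open, not a proof.
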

\begin{cor}
 \label{coroll}
 Let $q=2$. If $n$ is the size of a strong blocking set in $PG(k-1,2)$, then $n\geq3(k-1)$.
\end{cor}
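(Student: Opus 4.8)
The plan is to obtain this as an immediate specialization of Theorem~\ref{main} through the dictionary recalled in \cite[Theorem 3.4]{A2}. First I would check the one trivial point needed to apply that dictionary: a strong blocking set $\mathcal{M}$ of size $n$ in $PG(k-1,q)$ cannot be contained in any hyperplane, since for a hyperplane $\sigma$ one already has $\langle\sigma\cap\mathcal{M}\rangle=\sigma$, so $\mathcal{M}$ spans the whole space. Hence, choosing any ordering of its points and any nonzero representatives as columns of a matrix $G$, the code $\mathcal{C}=\mathrm{rowspace}(G)$ is a non-degenerate $[n,k]_q$-code whose associated point set is $\mathcal{M}$.

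By \cite[Theorem 3.4]{A2}, $\mathcal{C}$ is then a minimal code, so Theorem~\ref{main} applies and yields $n\geq(k-1)(q+1)$. Substituting $q=2$ gives $n\geq(k-1)(2+1)=3(k-1)$, which is exactly the assertion.

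There is essentially no obstacle here: everything reduces to recognizing that a strong blocking set of size $n$ is one of the point sets counted by Theorem~\ref{main} and then plugging in $q=2$. If one wanted a self-contained geometric derivation instead of citing Theorem~\ref{main}, one could argue that every hyperplane $\sigma\cong PG(k-2,2)$ must meet $\mathcal{M}$ in a spanning subset of $\sigma$, hence in at least $k-1$ points, and then run the standard pencil/counting argument over the $q+1=3$ hyperplanes through a fixed $(k-3)$-flat to recover the factor $q+1=3$; but this merely retraces the proof of Theorem~\ref{main} in the special case $q=2$ and adds nothing, so invoking Theorem~\ref{main} directly is the cleanest route.
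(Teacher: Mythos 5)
Your proof is correct and matches the paper's (implicit) argument: the corollary is just Theorem~\ref{main} specialized to $q=2$, transported to strong blocking sets via the correspondence of \cite[Theorem 3.4]{A2}. Your extra check that a strong blocking set spans the space (hence yields a non-degenerate code) is a sensible detail the paper leaves unstated.
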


\section{Minimal strong blocking sets in $PG(3,2)$}
 Consider now the projective space $PG(3,2)$. From Corollary \ref{coroll} we know that the minimum possible size for a strong blocking set is 9. In \cite[Example 5.10]{A2}, is pointed out that a computer search showed that there is only one minimal code of length 9 up to equivalence. Here we prove in terms of minimal strong blocking sets of $PG(3,2)$. We call \textit{minimal strong blocking set} a strong blocking set $S$ meeting te
 the lower bound given by Theorem \ref{main}.

 It is easy to see that hyperbolic quadrics $Q^{+}(3,2)$ are minimal strong blocking sets, i.e. all planar intersections generate the whole plane. The proof is contained also in \cite{A2}, and belongs to a more general result in \cite{Timpa}. In fact, it is w
 known that plane section of a quadric is a non-degenerate conic, or a degenerate conic consisting on two concurrent lines, and in both cases the whole plane is generated by the intersection.

We now focus on a geometric characterization of minimal strong blocking sets of $PG(3,2)$
\begin{lem}
 \label{lemma1}
 Consider a minimal strong blocking set $S$ of $PG(3,2)$.
 \begin{itemize}
  \item Any projective plane $\pi$ in $PG(3,2)$ contains at most 5 points of $S$.
  \item If $\pi$ contains a line of $S$, then it contains 5 points of $S$, forming two lines meeting in a point.
 \end{itemize}
\end{lem}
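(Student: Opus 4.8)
The plan is to establish the two items in order, deriving the second from the first. Throughout I use only elementary incidence facts about $PG(3,2)$: it has $15$ points and $15$ planes, each line lies on exactly $3$ planes, two distinct planes meet in a line, a line not contained in a plane meets that plane in exactly one point, and a plane contains $7$ points. Also, $S$ being minimal means $|S|=9$.

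For the first item, suppose some plane $\pi$ satisfies $a:=|\pi\cap S|\ge 6$, and set $R:=S\setminus\pi$. Since a plane has $7$ points we have $a\le 7$, hence $|R|=9-a\in\{2,3\}$. The key point is that \emph{every} plane $\pi'\ne\pi$ must contain a point of $R$: writing $\ell'=\pi\cap\pi'$ (a line), we have $\pi'\cap S=(\ell'\cap S)\cup(\pi'\cap R)$, so if $\pi'$ avoided $R$ then $\pi'\cap S\subseteq\ell'$ would not span $\pi'$, contradicting that $S$ is a strong blocking set. Thus all $14$ planes other than $\pi$ meet $R$, while $\pi$ itself does not. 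Now $R$ cannot be collinear: if $|R|=3$ and its three points lay on a line $m$, then $m$ would be disjoint from $\pi$, impossible for a line not contained in $\pi$. Counting the planes meeting $R$ by inclusion--exclusion (two points span a line, which lies on $3$ planes; three non-collinear points span a unique plane) gives at most $2\cdot 7-3=11$ such planes if $|R|=2$, and at most $3\cdot 7-3\cdot 3+1=13$ if $|R|=3$; either bound contradicts the fact that all $14$ planes $\ne\pi$ meet $R$. Hence $|\pi\cap S|\le 5$ for every plane $\pi$.

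For the second item, let $\ell\subseteq S$ be a line contained in a plane $\pi$. Since $\pi\cap S$ must span $\pi$, it properly contains $\ell$, so $|\pi\cap S|\ge 4$; and $|\pi\cap S|\le 5$ by the first item. Suppose $|\pi\cap S|=4$, say $\pi\cap S=\ell\cup\{R\}$, and set $B:=S\setminus\pi$, so $|B|=5$. Let $\pi,\tau_1,\tau_2$ be the three planes through $\ell$. Each $b\in B$ lies outside $\pi$, so the plane $\langle\ell,b\rangle$ equals $\tau_1$ or $\tau_2$; hence $B=(B\cap\tau_1)\sqcup(B\cap\tau_2)$. For $i=1,2$ we have $\tau_i\cap\pi=\ell$, so $\tau_i\cap S=\ell\cup(B\cap\tau_i)$, and by the first item $|\tau_i\cap S|\le 5$, whence $|B\cap\tau_i|\le 2$. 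Then $|B|\le 4$, contradicting $|B|=5$. Therefore $|\pi\cap S|=5$. Writing $\pi\cap S=\ell\cup\{R_1,R_2\}$ with $R_1,R_2\in\pi\setminus\ell$ distinct, the line $R_1R_2$ meets $\ell$ in a unique point $Y$, and $\pi\cap S=\ell\cup R_1R_2$ is the union of the two lines $\ell$ and $R_1R_2$, which meet at $Y$.

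The main obstacle is the first item; once ``no plane of $PG(3,2)$ carries $6$ or $7$ points of $S$'' is secured, the second is a short counting consequence. The one subtlety in the first item is the non-collinearity of $R$, which must be noted to obtain the bound $13$ rather than the useless $3\cdot 7=21$; after that it is pure inclusion--exclusion. In the second item the force comes from the fact that the five points of $B$ are distributed between just the two remaining planes through $\ell$, while the first item caps the number of $B$-points in each of those planes at two.
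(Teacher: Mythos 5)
Your proof is correct. For the first item you take a genuinely different route from the paper: there, assuming $|S\setminus\pi|\le 3$, one picks a plane $\sigma$ containing $S\setminus\pi$ and observes that the third plane $\delta$ through the line $\sigma\cap\pi$ satisfies $\delta\cap S\subseteq\sigma\cap\pi$, so $\delta$ is not spanned. You instead count: every one of the $14$ planes other than $\pi$ must meet $R=S\setminus\pi$, yet inclusion--exclusion (using the non-collinearity of $R$ when $|R|=3$, correctly justified since a line disjoint from $\pi$ is impossible) caps the number of planes meeting $R$ at $11$ or $13$. The paper's argument is shorter and exhibits an explicit offending plane; yours trades that for a routine count, but both rest on the same underlying observation that a plane avoiding $S\setminus\pi$ meets $S$ inside a line. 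For the second item the two proofs are essentially the same count --- the nine points of $S$ distributed over the three planes through $\ell$, each capped at five --- though the paper sums $|S\cap\pi_i|$ over all three planes at once, while you reach a contradiction from $|\pi\cap S|=4$ via $|B\cap\tau_i|\le 2$. A small bonus of your write-up is that you actually verify the ``two lines meeting in a point'' structure (the line $R_1R_2$ meets $\ell$ exactly in its third point), a detail the paper's proof leaves implicit.
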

\begin{proof}
 \begin{itemize}
  \item Fix a plane $\pi$. Since $|S|=9$, we want to prove that $|S\setminus\pi|>3$. Assume by contradiction that $|S\setminus\pi|\leq3$, then consider the plane $\sigma$ through the points in $S\setminus\pi$. The intersection of $\sigma$ and $\pi$ is a line $\ell$, and through a $\ell$ there pass exactly three planes: $\sigma$, $\pi$ and $\delta$. But now $\delta\cap S\subseteq\ell$, which does not generate the whole plane $\delta$.
  \item Consider a line $\ell\subset S$. Through $\ell$ there pass the 3 planes $\sigma$, $\pi$ and $\delta$. Since by the previous a plane may not contain more than 5 points of $S$, and since $|S|=9$, we have $|S\cap\pi|=|S\cap\sigma|=|S\cap\delta|=5$.
 \end{itemize}
\end{proof}
\begin{lem}
 \label{lemma2}
 Let $S$ be a minimal strong blocking set of $PG(3,2)$. Through each point $P\in S$ there pass exactly 2 lines entirely contained in $S$.
\end{lem}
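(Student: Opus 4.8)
The plan is to pin down how $S$ meets the planes of $PG(3,2)$ by a double count, to extract from this the exact number of lines contained in $S$, and then to run a short local argument at each point of $S$. \emph{Step 1 (plane‑intersection pattern).} Since $S$ is a strong blocking set, $\langle S\cap\pi\rangle=\pi$ for every plane $\pi$, so $|S\cap\pi|\ge 3$; and $|S\cap\pi|\le 5$ by Lemma~\ref{lemma1}. Counting incident pairs $(P,\pi)$ with $P\in S\cap\pi$ gives $\sum_\pi|S\cap\pi|=9\cdot 7=63$, and counting pairs $(\{P,P'\},\pi)$ with $\{P,P'\}\subseteq S\cap\pi$ gives $\sum_\pi\binom{|S\cap\pi|}{2}=\binom 9 2\cdot 3=108$, because two points determine a line and a line lies on $3$ planes. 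Writing $n_i$ for the number of planes with $|S\cap\pi|=i$ and using $n_3+n_4+n_5=15$, these two identities force $n_3=6$, $n_4=0$, $n_5=9$. The crucial output is $n_4=0$: \emph{no plane meets $S$ in exactly four points}.

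\emph{Step 2 (number of lines of $S$).} A $5$-point subset of a Fano plane contains exactly two lines — its complement is a pair of points, avoided by exactly two of the seven lines — while a $3$-point set spanning the plane contains no line (a line has $3$ points, so it would be the whole set, hence collinear, hence non‑spanning). Thus each of the nine planes with $|S\cap\pi|=5$ contains exactly two lines of $S$ and each of the six planes with $|S\cap\pi|=3$ contains none. Double counting pairs (line $g\subseteq S$, plane through $g$), with each line on $3$ planes, gives $3L=2\cdot 9$, so $S$ contains exactly $L=6$ lines; counting pairs $(P,g)$ with $P\in g\subseteq S$ then gives $\sum_{P\in S}a_P=3L=18$, where $a_P$ denotes the number of lines of $S$ through $P$.

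\emph{Step 3 (local argument and conclusion).} The seven lines of $PG(3,2)$ through a fixed $P\in S$ partition the other $14$ points into pairs, exactly $8$ of which lie in $S$; if $a_P,b_P,c_P$ count these lines meeting $S\setminus\{P\}$ in $2,1,0$ points, then $a_P+b_P+c_P=7$ and $2a_P+b_P=8$, so $a_P=c_P+1\ge 1$. Suppose $a_P=1$; then $c_P=0$, so every line through $P$ other than the unique line $\ell\subseteq S$ through $P$ carries exactly one further point of $S$. Of the seven planes through $P$ only three contain $\ell$, so some plane $\pi\ni P$ avoids $\ell$; it contains three lines through $P$, each contributing one further point of $S$, whence $|S\cap\pi|=4$, contradicting $n_4=0$. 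Hence $a_P\ge 2$ for every $P\in S$, and since $\sum_{P\in S}a_P=18=2\,|S|$ we must have $a_P=2$ for all $P$.

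I expect the real content to sit in Step~1: once the second‑moment count yields $n_4=0$ (and hence, via Step~2, $\sum_{P}a_P=18$), the only case that must be excluded directly is $a_P=1$, and this is exactly the configuration that produces a forbidden $4$-point plane; the remaining values $a_P=3,4$ never need to be treated separately. So the single delicate point is the moment computation, together with the elementary Fano‑plane facts it is combined with.
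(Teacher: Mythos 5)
Your proof is correct, and it takes a genuinely different route from the paper's. The paper argues locally: it first rules out three concurrent lines of $S$ by exhibiting two lines through $P$ meeting $S$ only in $P$ and observing that the plane they span cannot be generated by its intersection with $S$; it then gets at least one line through each point by pigeonhole ($8>7$) and upgrades this to two by invoking the second bullet of Lemma~\ref{lemma1} (each of the three planes through a line of $S$ carries a second line) and distributing those second lines over the three points of $\ell$. You instead run a global second-moment count: the identities $\sum_\pi|S\cap\pi|=63$ and $\sum_\pi\binom{|S\cap\pi|}{2}=108$, together with $3\le|S\cap\pi|\le5$ and $n_3+n_4+n_5=15$, pin down the full intersection spectrum $(n_3,n_4,n_5)=(6,0,9)$; from this you get $L=6$ lines in $S$, hence $\sum_P a_P=18$, and the only case needing a separate exclusion, $a_P=1$, is killed by $n_4=0$ via the plane through $P$ missing the unique line $\ell$. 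All the incidence numbers you use (7 planes and 7 lines through a point, 3 planes through a line, the two-line structure of a 5-subset of a Fano plane) are correct, and the averaging at the end is airtight. Your approach buys more: the spectrum $(6,0,9)$ and the exact line count $L=6$ are stronger structural facts than the paper extracts at this stage, and you never need the paper's separate "at most two lines through a point" step, since it falls out of $\sum_P a_P=2|S|$ once $a_P\ge2$ is known. The paper's argument is shorter to state and avoids the moment computation, but leans on the more delicate second bullet of Lemma~\ref{lemma1}, which your proof bypasses entirely.
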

\begin{proof}
 Firstly we prove that through $P$ may not pass more than 2 lines of $S$. Otherwise, say $\ell_1,\ell_2,\ell_3$ are 3 lines through $P$ contained in $S$. Now, $|\ell_1\cup\ell_2\cup\ell_3|=7$, and the other 2 points of the minimal strong blocking set $S$ are on two more lines $\ell_4$ and $\ell_5$ through $P$. Since $P$ lies on 7 lines, take $\ell_6$ and $\ell_7$ meeting $S$ only in $P$. Now the plane $\langle\ell_6,\ell_7\rangle$ is such that $S\cap\langle\ell_6,\ell_7\rangle\in\{S\cap\ell_1,S\cap\ell_2,S\cap\ell_3,S\cap\ell_4,S\cap\ell_5\}$, and in all these cases the intersection does not generate $\langle\ell_6,\ell_7\rangle$.
 The second step is to prove that through each $P\in S$ there pass at least two lines of $S$. Since $P$ lies on $7$ lines and $|S\setminus\{P\}|=8$, by counting we see that at least one of the lines through $P$ is entirely contained in $S$ (recall that a projective line in $PG(3,2)$ has 3 points). Now fix a line $\ell\subset S$, and the 3 planes $\pi_1$, $\pi_2$ and $\pi_3$ through $\ell$. Since by Lemma \ref{lemma1} $|S\cap\pi_1|=|S\cap\pi_2|=|S\cap\pi_3|=5$, each of the planes through $\ell$ contains an other line in $S$. But we just proved that a point may not lie on 3 lines in $S$, so taking the 3 points $P_1,P_2,P_3\in\ell$, we see that, up to a possible permutation on points, through $P_i$ there pass exactly an other line in $S\cap\pi_i$, $i\in\{1,2,3\}$.
 \end{proof}

 We are now ready to prove the main theorem.
\begin{thm}
 The minimal strong blocking sets of $PG(3,2)$ are exactly the hyperbolic quadrics $Q^{+}(3,2)$.
\end{thm}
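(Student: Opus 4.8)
The plan is to show that $S$ must be the union of three pairwise skew lines of $PG(3,2)$, and then to recognise such a configuration as a hyperbolic quadric. By Lemma~\ref{lemma2} every one of the $9$ points of $S$ lies on exactly two lines of $S$; since each line of $PG(3,2)$ carries $3$ points, a double count gives exactly $9\cdot 2/3=6$ lines entirely contained in $S$. I would fix one such line $\ell\subset S$, write $\ell=\{P_1,P_2,P_3\}$, and let $m_i$ denote the unique line of $S$ through $P_i$ distinct from $\ell$, which exists and is unique by Lemma~\ref{lemma2}.

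The crux is to prove that $m_1,m_2,m_3$ are pairwise disjoint. Suppose $m_i$ and $m_j$ with $i\neq j$ met in a point $Q$; first note $m_i\neq m_j$, since a line through both $P_i$ and $P_j$ would have to be $\ell$. Then the plane $\pi=\langle m_i,m_j\rangle$ contains the $5$ points of $m_i\cup m_j$, and it also contains $P_i$ and $P_j$, hence the line $\ell=\langle P_i,P_j\rangle$, and hence the third point $P_k$ of $\ell$. Since $P_k\notin m_i$ (otherwise $m_i=\langle P_i,P_k\rangle=\ell$) and likewise $P_k\notin m_j$, the plane $\pi$ would contain at least $6$ points of $S$, contradicting the first part of Lemma~\ref{lemma1}. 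Thus $m_1,m_2,m_3$ are pairwise disjoint, hence pairwise skew, and together they cover $3\cdot 3=9=|S|$ points of $S$; therefore $S=m_1\cup m_2\cup m_3$.

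It remains to identify the point set of three mutually skew lines of $PG(3,2)$ with a hyperbolic quadric. The quickest route is to invoke the classical fact that three mutually skew lines of $PG(3,q)$ lie on a unique hyperbolic quadric $Q^{+}(3,q)$ (they constitute one of its two reguli, while the $q+1$ common transversals constitute the other); as $Q^{+}(3,2)$ has exactly $(2+1)^2=9$ points and $S$ is contained in it while $|S|=9$, we get $S=Q^{+}(3,2)$. Alternatively one can be fully explicit: choosing coordinates with $m_1=\langle e_1,e_2\rangle$ and $m_2=\langle e_3,e_4\rangle$, the residual group of projectivities $\mathrm{diag}(A,B)$, $A,B\in GL_2(\mathbb{F}_2)$, acts transitively on the lines skew to both (such a line is the graph of an isomorphism $\langle e_1,e_2\rangle\to\langle e_3,e_4\rangle$, and $\psi\mapsto B\psi A^{-1}$ is transitive), so we may take $m_3=\langle e_1+e_3,\,e_2+e_4\rangle$, whereupon a direct check shows that the $9$ points of $S$ are precisely the zeros of the non-degenerate hyperbolic form $x_1x_4+x_2x_3$. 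Together with the observation already recorded in the paper that every $Q^{+}(3,2)$ is a minimal strong blocking set, this proves the theorem. The only genuine difficulty is the disjointness argument of the middle paragraph; the concluding identification is essentially standard.
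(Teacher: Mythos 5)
Your proof is correct, and it completes all the steps soundly: the double count giving six lines of $S$, the disjointness of $m_1,m_2,m_3$ (the plane $\langle m_i,m_j\rangle$ would pick up $P_k$ via $\ell=\langle P_i,P_j\rangle$ and thus meet $S$ in at least $6$ points, violating Lemma~\ref{lemma1}), and the identification of three pairwise skew lines with a regulus of a hyperbolic quadric, which for $q=2$ exhausts all $(q+1)^2=9$ points. The difference from the paper is in the last step: the paper simply asserts that the characterization in Lemmas~\ref{lemma1} and~\ref{lemma2} combines with the classification of quadrics in Hirschfeld--Thas, leaving the actual identification to that citation, whereas you extract from Lemma~\ref{lemma2} the concrete structural statement that $S$ is a union of three mutually skew lines and then recognize this as $Q^{+}(3,2)$ either by the classical regulus fact or by an explicit change of coordinates to the form $x_1x_4+x_2x_3$. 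Your route is therefore more self-contained and elementary --- it needs no external classification theorem and none of the exhaustive orbit enumeration of the paper's Remark --- at the modest cost of having to verify the skewness of the $m_i$ by hand; what the paper's citation-based shortcut buys is brevity, while yours buys a proof a reader can check line by line inside $PG(3,2)$. You also correctly remember to quote the converse direction (every $Q^{+}(3,2)$ is a minimal strong blocking set), which the paper establishes separately before the lemmas, so both inclusions of ``exactly'' are covered.
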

\begin{proof}
 The thesis arises from the characterization given in Lemma \ref{lemma1} and Lemma \ref{lemma2}, together with the known classification of quadrics in \cite{Hirschfeld3}\footnote{We thank the reviewer for the shortcut in the proof of the main theorem}.
\end{proof}

\begin{rem}
 An other proof of the theorem, which does not involve the classification of polar spaces, arises by checking all projectively non-equivalent configurations of $)$ points in $PG(3,2)$. Since $|PG(3,2)|=15$, the set $X$ of all possible configurations of 9 points in $PG(3,2)$ has size $|X|=\binom{15}{9}=5005$. In $PG(3,2)$ there are exactly 5 projectively non-equivalent configurations of 9 points, i.e. $PGL(4,2)$ has exactly 5 orbits on $X$:
\begin{enumerate}
  \item 280 hyperbolic quadrics $Q^{+}(3,2)$;
  \item 105 configurations that consist on a point $P$ and all the other points out of a fixed plane $\pi\ni P$;
  \item 420 configurations that consist on a plane $\pi$ and two points $P,Q\notin\pi$;
  \item 1680 configurations that consist on a punctured plane $\pi\setminus\{P\}$ and other 3 points $Q,R,S\notin\pi$ such that $\langle Q,R,S\rangle\cap\pi=\ell\not\ni P$;
  \item 2520 configurations that consist on a punctured plane $\pi\setminus\{P\}$ and other 3 points $Q,R,S\notin\pi$ such that $\langle Q,R,S\rangle\cap\pi=\ell\ni P$.
 \end{enumerate}
By the \textit{Orbit-Stabilizer Theorem} we have $\frac{|PGL(4,2)|}{|PGO^{+}(4,2)|}=\frac{20160}{72}=280$ hyperbolic quadrics. Now we consider the second configuration. The intersection with the plane $\pi$ consists of the singleton $\{P\}$, while the intersections with the other planes have size 4 or 5, so we have no equivalence with the hyperbolic quadrics. The number of point in $|PG(3,2)|$ is 15 and through each point there pass 7 planes, so the second set has size $7\cdot15=105$. The third configuration considers the 15 planes and the all possible pairs of points in the remaining $15-7=|PG(3,2)\setminus\pi|$, so we get $15\cdot\binom{8}{2}=420$. Now we consider all the possibilities made of 6 points on a plane $\pi$ and three points $Q,R,S\notin\pi$. Since the intersection with $\pi$ is 6, the configuration is non-equivalent to all the other considered.
We have 15 planes, and each time we take out $P$, one of the 7 points of the plane. Moreover, we should consider all the possible triple of the 8 points out of $\pi$, so the total number is $15\cdot7\cdot\binom{8}{3}=5880$. here we count twice the 1680 configurations as in 4, and we have 2520 configurations as in 5. But now we have to take care on two different cases: if $P\in\ell=\langle Q,R,S\rangle\cap\pi$ the intersection with $\langle Q,R,S\rangle$ has size 5, while if $P\notin\ell=\langle Q,R,S\rangle\cap\pi$, the intersection with $\langle Q,R,S\rangle$ consists on $\ell\cup\{Q,R,S\}$ and has size 6, so we have counted twice this configuration in the 5880 possibilities. An other way to consider the latter, is to fix the line $\ell$ and fix two of the three planes through $\ell$, taking out one point from each plane. Since we have 35 lines in $PG(3,2)$ and we consider all the $\binom{3}{2}=3$ pairs of planes $\pi,\pi'$ through $\ell$, excluding one of the 4 points of $\pi\setminus\ell$ and one of the 4 points of $\pi'\setminus\ell$, the total number is $35\cdot3\cdot4\cdot4=1680$. The last configuration, when $\langle Q,R,S\rangle\cap\pi=\ell\ni P$, contains $5880-(2\cdot1680)=2520$ sets. We are actually considering all the possibilities since $5005=280+105+420+1680+2520$.
\end{rem}

 \section{Conclusion}
  In this paper we provided examples of strong blocking set of the minimal possible size allowed by Theorem \ref{main} in $PG(3,2)$. From Corollary \ref{coroll} we know that the size of a minimal strong blocking set in a projective $(k-1)$-dimensional space over $\mathbb{F}_{2}$ is $3(k-1)$. In the \textit{Fano plane} we find the trivial 7 configurations of 6 points given by all the point of the plane except one. The following step should rely on $PG(4,2)$. In this case $n\geq12$, while, for example, a parabolic quadric $Q(4,2)$ is a strong blocking set of size 15. Computer-aided search does not show any configuration of 12 points such that the intersection with all hyperplanes generate the whole 3-space, while it is possible to show minimal strong blocking set of size 15 in $PG(5,2)$.

\end{document}